\numberwithin{equation}{section}
\theoremstyle{definition}
\newtheorem*{rep@theorem}{\rep@title}
\newcommand{\newreptheorem}[2]{%
\newenvironment{rep#1}[1]{%
 \def\rep@title{#2 \ref{##1}}%
 \begin{rep@theorem}}%
 {\end{rep@theorem}}}
\newtheorem{theorem}{Theorem}[section]
\newtheorem{corollary}[theorem]{Corollary}
\newtheorem{proposition}[theorem]{Proposition}
\newtheorem*{theorem*}{Theorem}
\newtheorem*{proposition*}{Proposition}
\newtheorem{definition}[theorem]{Definition}
\newtheorem{remark}[theorem]{Remark}
\newtheorem{example}[theorem]{Example}
\newtheorem*{claim*}{Claim}
\newtheorem*{conjecture*}{Conjecture}
\newtheorem*{observation*}{Observation}
\newtheorem*{question*}{Question}
\begin{document}
\title{A Wiener-type theorem for arcs in the unit circle}
\author{Huichi Huang}
\address{Huichi Huang, Mathematical Department, College of Mathematics and Statistics, Chongqing University, Chongqing, 401331, PR China}
\thanks{The author was partially supported by NSFC no. 11871119 and Chongqing Municipal Science and Technology Commission fund no. cstc2018jcyjAX0146.}
\email{huanghuichi@cqu.edu.cn}
\keywords{Fej\'er's theorem, Wiener's theorem}
\subjclass[2010]{Primary: 37A05; Secondary: 11J71}
\date{\today}
\begin{abstract}
  We prove a Wiener-type theorem for arcs in the unit circle which  expresses the measure of an arc in the unit circle via the measure's Fourier coefficients. Then we use it to give the Fourier series of the Cantor function and to compute the local dimension of a measure satisfying certain conditions of Fourier coefficients.
\end{abstract}

\maketitle
\tableofcontents
\section{Introduction}

A classical result in harmonic analysis, due to N. Wiener in 1933, says that  on the unit circle $\mathbb{T}$, one can express a finite Borel measure's value at a single point  via this measure's Fourier coefficients~\cite{Wiener1933}~\cite[Chap. I, 7.13]{Katznelson2004}.
\begin{theorem*}~[Wiener's theorem]\

Suppose $\mu$ is a finite Borel measure on $\mathbb{T}$. Then for every $z\in\mathbb{T}$,
$$\mu\{z\}=\lim_{N\to\infty}\frac{1}{2N+1}\sum_{n=-N}^N \hat{\mu}(n)z^n.$$
\end{theorem*}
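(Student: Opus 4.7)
The plan is to write out the averaged partial sum explicitly, swap the finite sum with the integral defining $\hat{\mu}(n)$, and recognize the resulting integrand as a simple kernel whose pointwise limit is the indicator of $\{z\}$; dominated convergence then finishes the job.

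First I would unfold the definition $\hat{\mu}(n)=\int_{\mathbb{T}} w^{-n}\,d\mu(w)$ and, using Fubini (legitimate because the sum is finite), rewrite
\[
\frac{1}{2N+1}\sum_{n=-N}^{N}\hat{\mu}(n)\,z^{n}
= \int_{\mathbb{T}} K_{N}(z/w)\,d\mu(w),
\qquad
K_{N}(\zeta):=\frac{1}{2N+1}\sum_{n=-N}^{N}\zeta^{n}.
\]
The next step is to analyze $K_{N}$ pointwise on $\mathbb{T}$. Clearly $K_{N}(1)=1$ for every $N$. For $\zeta\neq 1$, summing the geometric series yields
\[
K_{N}(\zeta)=\frac{1}{2N+1}\cdot\frac{\zeta^{N+1}-\zeta^{-N}}{\zeta-1},
\]
whose modulus is bounded by $\tfrac{2}{(2N+1)|\zeta-1|}$, hence tends to $0$. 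Also $|K_{N}(\zeta)|\le 1$ uniformly in $\zeta$ and $N$, since $|\zeta^{n}|=1$.

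Now I would feed this into the dominated convergence theorem. The integrand $w\mapsto K_{N}(z/w)$ is uniformly bounded by $1$, which is $\mu$-integrable as $\mu$ is finite, and converges pointwise to $\mathbf{1}_{\{z\}}(w)$ (equal to $1$ iff $w=z$, i.e.\ $z/w=1$, and $0$ otherwise). Passing to the limit under the integral gives
\[
\lim_{N\to\infty}\int_{\mathbb{T}} K_{N}(z/w)\,d\mu(w)=\int_{\mathbb{T}} \mathbf{1}_{\{z\}}(w)\,d\mu(w)=\mu\{z\},
\]
which is the desired identity.

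The only step requiring any care is the pointwise convergence of $K_{N}(\zeta)\to 0$ for $\zeta\neq 1$; this is really just the geometric-series estimate above, so the argument presents no genuine obstacle. The uniform bound $|K_{N}|\le 1$ together with the finiteness of $\mu$ makes dominated convergence immediate, so the proof is essentially a one-line application of Fubini plus Lebesgue.
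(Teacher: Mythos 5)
Your proof is correct, and it is the standard argument for Wiener's theorem (essentially the one in Katznelson's book): write the Ces\`aro-type average as the integral of the Dirichlet-average kernel $K_N(z/w)=\frac{1}{2N+1}\sum_{n=-N}^N (z/w)^n$ against $\mu$, observe that $K_N$ is uniformly bounded by $1$ and converges pointwise to the indicator of $\{1\}$ via the geometric-series bound $|K_N(\zeta)|\le \tfrac{2}{(2N+1)|\zeta-1|}$ for $\zeta\neq 1$, and conclude by dominated convergence. Note that the paper itself does not prove this statement: it quotes it as a classical result with references, so there is no internal proof to compare against. One minor remark: if ``finite Borel measure'' is meant to include complex measures, you should dominate against the total variation $|\mu|$ when invoking dominated convergence; for positive finite measures your argument applies verbatim.
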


In this article we prove a Wiener-type theorem for arcs, which is to say,  an arc's measure can also be expressed in terms of the measure's Fourier coefficients.

\begin{theorem}~\label{thm:arc}~[Wiener-type theorem for arcs]\

Suppose $\mu$ is a Borel probability measure on $\mathbb{T}$, then

\begin{align*}
\mu[a,b)&=\frac{1}{\pi}\sum_{n=1}^\infty \Im(\frac{\hat{\mu}(n)-1}{n}[e(nb)-e(na)])    \\
&+\lim_{n\to \infty}\frac{1}{2n}\sum_{j=1}^n (\hat{\mu}(j)-1)[e(ja)-e(jb)]
\end{align*}
for all $0<a<b\leq 1$. Moreover
\begin{align*}
\mu[0,b)&=1+\frac{1}{\pi}\sum_{n=1}^\infty \Im(\frac{\hat{\mu}(n)-1}{n}[e(nb)-1])   \\
&+\lim_{n\to \infty}\frac{1}{2n}\sum_{j=1}^n (\hat{\mu}(j)-1)[1-e(jb)]
\end{align*}
for every  $0<b\leq 1$.
The notation  $\Im(a)$ stands for the imaginary part of a complex number $a$ and $e(x)=\exp(2\pi i x)$ for a real number $x$.
\end{theorem}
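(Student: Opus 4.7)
The plan is to apply Fej\'er's theorem to the indicator $\chi_{[a,b)}$ and then integrate against $\mu$. Adopting the convention $\hat\mu(n)=\int e(-nx)\,d\mu(x)$ that makes Wiener's theorem read as stated, the Fourier coefficients of $\chi_{[a,b)}$ are $c_0=b-a$ and $c_n=(e(-na)-e(-nb))/(2\pi in)$ for $n\neq 0$. The Fej\'er means $\sigma_N(\chi_{[a,b)})$ are uniformly bounded by $1$ and, by the classical Fej\'er theorem at jump discontinuities, converge to $\chi_{[a,b)}(x)$ pointwise off $\{a,b\}$ and to $1/2$ at the two jumps. Dominated convergence therefore yields
\[
\int \sigma_N(\chi_{[a,b)})\,d\mu \longrightarrow \mu[a,b)-\tfrac{1}{2}\mu\{a\}+\tfrac{1}{2}\mu\{b\}.
\]
Pairing conjugate frequencies in the explicit Fourier expansion simultaneously rewrites the same integral as $(b-a)+\sum_{n=1}^N(1-\tfrac{n}{N+1})\beta_n$ with $\beta_n=\frac{1}{\pi}\Im\bigl(\tfrac{\hat\mu(n)}{n}[e(nb)-e(na)]\bigr)$.

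To reshape this into the form stated in Theorem~\ref{thm:arc}, I decompose $\beta_n=\tilde\beta_n+\gamma_n$, where $\tilde\beta_n=\frac{1}{\pi}\Im\bigl(\tfrac{\hat\mu(n)-1}{n}[e(nb)-e(na)]\bigr)$ and $\gamma_n=\frac{1}{\pi n}(\sin 2\pi nb-\sin 2\pi na)$. The classical identity $\sum_n \sin(2\pi nx)/n=\pi(1-2x)/2$ for $x\in(0,1)$ gives $\sum_{n=1}^\infty \gamma_n=-(b-a)$, while Dirichlet's sine-sum bound forces $\frac{1}{N+1}\sum_{n=1}^N n\gamma_n\to 0$. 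Substituting reduces the Fej\'er identity to
\[
\lim_{N\to\infty}\Bigl[\sum_{n=1}^N \tilde\beta_n-\tfrac{1}{N+1}\sum_{n=1}^N n\tilde\beta_n\Bigr]=\mu[a,b)-\tfrac{1}{2}\mu\{a\}+\tfrac{1}{2}\mu\{b\}.
\]

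For the Ces\`aro piece appearing in the theorem, Fubini rewrites it as $\int\frac{1}{2n}\sum_{j=1}^n(e(-jx)-1)(e(ja)-e(jb))\,d\mu(x)$, with integrand bounded by $2$ uniformly in $n$. Expanding the product and applying the elementary fact $\frac{1}{n}\sum_{j=1}^n e(jt)\to\mathbf{1}_{t\in\mathbb{Z}}$, bounded convergence identifies the limit as $\tfrac{1}{2}(\mu\{a\}-\mu\{b\})$ when $0<a<b<1$. Taking imaginary parts of the same calculation also shows $\frac{1}{N+1}\sum_{n=1}^N n\tilde\beta_n\to 0$, which forces the ordinary series $\sum \tilde\beta_n$ to converge, and pins down its sum as $\mu[a,b)-\tfrac{1}{2}\mu\{a\}+\tfrac{1}{2}\mu\{b\}$. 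Adding this to $\tfrac{1}{2}(\mu\{a\}-\mu\{b\})$ cancels the endpoint corrections and produces $\mu[a,b)$. The second formula, for $\mu[0,b)$, follows by the identical recipe; the extra additive constant $1$ appears because $\sum_n \sin(2\pi nx)/n$ vanishes at $x=0$ rather than taking its generic value $\pi/2$, and because $\mathbf{1}_{a\in\mathbb{Z}}$ equals $1$ when $a=0$.

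The main obstacle is upgrading the Ces\`aro summability delivered by Fej\'er to honest convergence of the regularized series $\sum \tilde\beta_n$: one must show $\frac{1}{N+1}\sum_{n=1}^N n\tilde\beta_n\to 0$, and this is essentially a one-sided version of Wiener's theorem applied simultaneously at the two endpoints $a$ and $b$. Once this vanishing limit is in hand, everything else is routine bookkeeping with the classical Fourier identities and bounded convergence.
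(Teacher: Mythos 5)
Your proof is correct, but it takes a genuinely different route from the paper's. The paper applies Fej\'er's theorem to the bounded-variation function $f(x)=D_\mu(x)-x$, computing $\hat f(n)=\hat\mu(n)/(2\pi i n)$ by Stieltjes integration by parts; it then invokes the generalized (one-sided F{\o}lner) Wiener theorem of \cite{Huang2016} twice --- once to kill the Ces\`aro correction $\frac{1}{n+1}\sum_{j=1}^n\Im(\hat\mu(j)e(jt))$, once to rewrite $\mu\{x\}$ as the Ces\`aro term appearing in the statement --- and finally produces the ``$\hat\mu(n)-1$'' form by subtracting the same identity for $\nu=\delta_0$. You instead dualize: you apply Fej\'er to the indicator $\chi_{[a,b)}$ and integrate against $\mu$ by dominated convergence, and your ``$-1$'' regularization comes from the explicit sawtooth identity $\sum_{n\geq 1}\sin(2\pi nx)/n=\pi(1-2x)/2$ rather than from a comparison with $\delta_0$. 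Notably, you never need to quote Wiener's theorem as a black box: your Fubini/bounded-convergence computation with $\frac1n\sum_{j=1}^n e(jt)\to\mathbf 1_{t\in\mathbb Z}$ proves the required Ces\`aro limits from scratch, and taking imaginary parts of that (real) limit yields $\frac{1}{N+1}\sum_{n=1}^N n\tilde\beta_n\to 0$. This last step is the same mechanism as in the paper --- both arguments upgrade Fej\'er's $(C,1)$ summability to honest convergence of the series by exhibiting the correction term as the imaginary part of a real Ces\`aro limit --- but your version is more self-contained, avoiding both Stieltjes integration by parts and the external appeal to the generalized Wiener theorem. What the paper's route buys is the intermediate identity for $D_\mu(x)-x$ (Equation~\ref{eq: comparison}), which is reused later in the local-dimension application, and a comparison formula valid for an arbitrary second measure $\nu$, of which $\nu=\delta_0$ is just one instance. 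One small point you should patch: your explicit bookkeeping covers $0<a<b<1$, while the theorem allows $b=1$, where $\sum_{n\geq 1}\sin(2\pi nb)/n=0$ (not $-\pi/2$) and $\mathbf 1_{b\in\mathbb Z}=1$; the same computation shows the two resulting discrepancies of $\tfrac12$ cancel against each other, exactly as in your treatment of $a=0$, so the stated formula survives unchanged, but this endpoint case deserves a sentence.
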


\section{Preliminary}

Identify $\mathbb{T}$ with $[0,1)$ when necessary.

 For a finite Borel measure on $\mathbb{T}$, define the {\bf distribution function} $D_\mu(x)$ by
$D_\mu(x)=\mu[0,x)$ for every $x\in[0,1]$.

The {\bf Fourier coefficients} $\{\hat{\mu}(n)\}_{n\in\mathbb{Z}}$ of a finite Borel measure $\mu$ on $\mathbb{T}$ are given by
$\hat{\mu}(n)=\int_{\mathbb{T}} e(-nx)\,d\mu(x).$

For $f\in L^p(\mathbb{T})$ for $p\in(1,\infty)$, the {\bf Fourier series} of $f$ is
$\sum_{n\in\mathbb{Z}} \hat{f}(n)e(nx),$ hereafter $e(x)$ stands for $e^{2\pi ix}$ for all $x\in \mathbb{R}$ and $\hat{f}(n)=\int_0^1 f(t)e(-nt)\,dt$ for all $n\in\mathbb{Z}$.

For $f,g\in L^1(\mathbb{T})$, the {\bf convolution} $f*g$ of $f$ and $g$ is given by
$$f*g(x)=\int_{0}^{1}f(y)g(x-y)\,dy$$ for all $x$ in $[0,1)$. Here $x-y$ is interpreted as the fractional part of $x-y$ which is in $[0,1)$.

Denote $\sum_{n=-N}^N\hat{f}(n)e(nx)$ by $S_N(f)(x)$.  By Carleson-Hunt theorem~\cite{Carleson1966,Hunt1968}, $S_n(f)(x)$ converges to $f(x)$ almost everywhere, but not  pointwisely even when $f$ is continuous~\cite[Chap. II, Sec. 2]{Katznelson2004}.


The {\bf Fej\'er kernel} for $\mathbb{T}$ is given by
$$K_n(t)=\sum_{j=-n}^n (1-\frac{|j|}{n+1})e(jt)=\frac{\sin^2\pi t}{(n+1)\sin^2{(n+1)\pi t}}$$ for every nonnegative integer $n$.

Fej\'er kernel has many nice properties. Below we list some of them which would be frequently used throughout the paper. See~\cite[p.181, Prop. 3.1.10]{Grafakos2014} and~\cite[p.205, (3.4.3)]{Grafakos2014} for proof.
\begin{proposition}~[Properties of $K_n$]~\label{pcfejer}
\begin{enumerate}
\item $K_n(t)=K_n(-t)$ for every $t\in \mathbb{T}$.
\item $K_n(t)\geq 0$ for all $t\in\mathbb{T}$.
\item $\int_{\mathbb{T}} K_n(t)\, dt=1$.
\item For $\lambda\in(0,\frac{1}{2})$, we have $\displaystyle\sup_{t\in [\lambda,1-\lambda]} {K_n(t)}\leq\frac{1}{(n+1)\sin^2\pi\lambda},$   hence $\displaystyle\lim_{n\to\infty}\int_{\lambda}^{1-\lambda} K_n(t)\,dt=0.$
\end{enumerate}
\end{proposition}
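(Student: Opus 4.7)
The plan is to compute $\int_{\mathbb{T}}\sigma_N\chi_{[a,b)}\,d\mu$ in two ways, where $\sigma_N f=K_N*f$ is the Fej\'er mean, and then rearrange. From Proposition~\ref{pcfejer}, $|\sigma_N\chi_{[a,b)}|\leq 1$, and the kernel's concentration near $0$ gives the jump version of Fej\'er's theorem: for every $x\in\mathbb{T}$,
\begin{equation*}
\sigma_N\chi_{[a,b)}(x)\longrightarrow \tfrac{1}{2}\bigl(\chi_{[a,b)}(x^+)+\chi_{[a,b)}(x^-)\bigr).
\end{equation*}
Dominated convergence against $\mu$ then yields
\begin{equation*}
\int \sigma_N\chi_{[a,b)}\,d\mu\longrightarrow \mu[a,b)+\tfrac{1}{2}\mu\{b\}-\tfrac{1}{2}\mu\{a\}.
\end{equation*}
Computing $\widehat{\chi_{[a,b)}}(0)=b-a$ and $\widehat{\chi_{[a,b)}}(n)=(e(-na)-e(-nb))/(2\pi in)$ for $n\neq 0$, and pairing $\pm n$ terms via $\hat\mu(-n)=\overline{\hat\mu(n)}$, gives the complementary expression
\begin{equation*}
\int \sigma_N\chi_{[a,b)}\,d\mu=(b-a)+\frac{1}{\pi}\sum_{n=1}^N\Bigl(1-\tfrac{n}{N+1}\Bigr)\frac{\Im\bigl([e(nb)-e(na)]\hat\mu(n)\bigr)}{n}.
\end{equation*}

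Next I split $\hat\mu(n)=1+(\hat\mu(n)-1)$. The ``$1$''-piece is the Ces\`aro mean at $b$ and at $a$ of the classical sawtooth series $\sum_{n\geq 1}\frac{\sin 2\pi nx}{\pi n}=\tfrac{1}{2}-x$ on $(0,1)$; since the ordinary sum exists, Ces\`aro equals ordinary and this piece contributes $a-b$. Subtracting, the ``$\hat\mu(n)-1$'' Ces\`aro sum tends to $\mu[a,b)+\tfrac{1}{2}(\mu\{b\}-\mu\{a\})$. To upgrade this to ordinary convergence of $\sum_{n\geq 1}\frac{\Im([e(nb)-e(na)](\hat\mu(n)-1))}{\pi n}$, I observe that the difference between the $N$th ordinary and Ces\`aro partial sums is $\frac{1}{\pi(N+1)}\sum_{n=1}^N\Im([e(nb)-e(na)](\hat\mu(n)-1))$, and using $(e(nb)-e(na))(\hat\mu(n)-1)=\int[e(n(b-t))-e(n(a-t))-(e(nb)-e(na))]\,d\mu(t)$ I rewrite it as $\frac{1}{\pi}\Im\int G_N(t)\,d\mu(t)$ with $G_N$ the $N$th Ces\`aro mean of the bracket. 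The geometric-sum bound $|\tfrac{1}{N+1}\sum_{n=1}^N e(n\alpha)|\leq 1$ makes $G_N$ uniformly bounded, and pointwise $G_N(t)\to \chi_{\{t=b\}}(t)-\chi_{\{t=a\}}(t)$, a real-valued function, so dominated convergence forces the imaginary part to vanish; thus ordinary convergence holds to the same limit.

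The same Wiener-type dominated-convergence lemma, namely $\tfrac{1}{n}\sum_{j=1}^n \hat\mu(j)e(jc)\to \mu\{c\}$ (DCT applied to $\tfrac{1}{n}\sum_{j=1}^n e(j(c-t))\to\chi_{\{c\}}(t)$), identifies the limit term:
\begin{equation*}
\lim_{n\to\infty}\frac{1}{2n}\sum_{j=1}^n(\hat\mu(j)-1)[e(ja)-e(jb)]=\tfrac{1}{2}\mu\{a\}-\tfrac{1}{2}\mu\{b\}.
\end{equation*}
Adding the series value $\mu[a,b)+\tfrac{1}{2}(\mu\{b\}-\mu\{a\})$ to this limit produces exactly $\mu[a,b)$. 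The formula for $\mu[0,b)$ follows from the same scheme applied to $\chi_{[0,b)}$; with $a=0$ the boundary Ces\`aro sum $\tfrac{1}{N+1}\sum_{n=1}^N e(n\cdot 0)\to 1$ rather than $0$, and this shift is precisely the source of the ``$1+$'' prefix in the second displayed formula.

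The main obstacle is the Ces\`aro-to-ordinary upgrade for the trigonometric series, which rests on the Wiener-type DCT lemma $\tfrac{1}{n}\sum_{j=1}^n\hat\mu(j)e(jc)\to \mu\{c\}$; happily this very lemma also furnishes the value of the limit term, so a single auxiliary result serves both the series and the atomic endpoint correction. The remaining work is careful accounting of the atoms $\mu\{a\},\mu\{b\}$ and of the boundary cases $a=0$ and $b=1$.
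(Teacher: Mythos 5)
Your proposal does not prove the statement it was assigned. The statement is Proposition~\ref{pcfejer}, the four elementary properties of the Fej\'er kernel $K_n$: evenness, nonnegativity, unit integral, and the bound $\sup_{t\in[\lambda,1-\lambda]}K_n(t)\leq \frac{1}{(n+1)\sin^2\pi\lambda}$ with its consequence $\int_\lambda^{1-\lambda}K_n(t)\,dt\to 0$. What you have written instead is a proof sketch of Theorem~\ref{thm:arc}, the Wiener-type theorem for arcs: you compute $\int\sigma_N\chi_{[a,b)}\,d\mu$ two ways, split $\hat\mu(n)=1+(\hat\mu(n)-1)$, and identify the limit term via a Wiener-type lemma. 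Worse, your very first step invokes ``From Proposition~\ref{pcfejer}, $|\sigma_N\chi_{[a,b)}|\leq 1$,'' i.e.\ you \emph{assume} the proposition you were asked to prove, so the argument is circular as a proof of that proposition. (For the record, the paper itself does not reprove these kernel properties either; it cites Grafakos, Prop.\ 3.1.10 and (3.4.3), and proves its main theorem by a different route than yours, working with the function $f(x)=D_\mu(x)-x$ and Fej\'er's theorem rather than with $\sigma_N\chi_{[a,b)}$.)

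What an actual proof of Proposition~\ref{pcfejer} requires is short and elementary, starting from the two expressions
$K_n(t)=\sum_{j=-n}^n\bigl(1-\tfrac{|j|}{n+1}\bigr)e(jt)=\tfrac{1}{n+1}\bigl(\tfrac{\sin{(n+1)\pi t}}{\sin\pi t}\bigr)^2$
(note the closed form as displayed in the paper has numerator and denominator inverted; the correct form is the one just written, and your proof should use it). Evenness follows from the substitution $j\mapsto -j$ in the sum, or from the closed form being even in $t$; nonnegativity is immediate from the closed form being a square divided by $n+1$; the unit integral follows from $\int_{\mathbb{T}}e(jt)\,dt=\delta_{j,0}$ applied term by term, leaving only the $j=0$ coefficient, which is $1$; and for the last item, on $[\lambda,1-\lambda]$ one has $\sin^2\pi t\geq\sin^2\pi\lambda$ and $\sin^2{(n+1)\pi t}\leq 1$, giving the stated sup bound, whence $\int_\lambda^{1-\lambda}K_n(t)\,dt\leq\frac{1-2\lambda}{(n+1)\sin^2\pi\lambda}\to 0$. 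None of this machinery appears in your proposal, so as a proof of Proposition~\ref{pcfejer} it has a complete gap: the conclusion is never addressed.
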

Fej\'er's theorem is as follows~\cite[Chap. I, Thm. 3.1(a)]{Katznelson2004}~\cite[Thm. 3.4.1]{Grafakos2014}.
\begin{theorem*}~[Fej\'{e}r's theorem]
~\label{Fejer1}\

If $f\in L^1(\mathbb{T})$ and for some $t\in[0,1)$, both the left and right limit of $f$ exist~(denoted by $f(t-)$ and $f(t+)$ respectively ), then
$$\displaystyle\lim_{n\to\infty}\sigma_n(f,t)=\frac{f(t+)+f(t-)}{2}.$$ Here
 $\sigma_n(f,t)=K_n*f(t)=\sum_{j=-n}^n (1-\frac{|j|}{n+1})\hat{f}(j)e(jt)$ for all nonnegative integer $n$.
\end{theorem*}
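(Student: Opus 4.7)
The plan is to apply Fej\'er's theorem to the left-continuous distribution function $D_\mu(x)=\mu[0,x)$, regarded as an element of $L^1(\mathbb{T})$. A Fubini swap first gives
\[
\hat{D}_\mu(n)=\int_0^1 \mu[0,x)\,e(-nx)\,dx=\int_0^1\!\!\int_y^1 e(-nx)\,dx\,d\mu(y)=\frac{\hat\mu(n)-1}{2\pi in}\qquad (n\neq 0).
\]
Since $D_\mu$ is left-continuous with right limit $D_\mu(t+)=\mu[0,t]$ at each $t\in(0,1)$, Fej\'er's theorem yields $\sigma_N(D_\mu,t)\to \mu[0,t)+\tfrac12\mu\{t\}$. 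Subtracting the limits at $t=b$ and $t=a$ gives
\[
\sigma_N(D_\mu,b)-\sigma_N(D_\mu,a)\;\longrightarrow\;\mu[a,b)+\tfrac12\bigl(\mu\{b\}-\mu\{a\}\bigr)\qquad(0<a<b<1).
\]

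Expanding the left side with the formula for $\hat D_\mu$ and pairing the $n$-th and $(-n)$-th terms via $\hat D_\mu(-n)=\overline{\hat D_\mu(n)}$ rewrites this Ces\`aro sum as
\[
\frac{1}{\pi}\sum_{n=1}^N\Bigl(1-\tfrac{n}{N+1}\Bigr)\frac{\Im\bigl[(\hat\mu(n)-1)(e(nb)-e(na))\bigr]}{n}.
\]
To pass to the ordinary series appearing in the theorem I use the identity $\sum_{n=1}^N(1-\tfrac{n}{N+1})c_n=\sum_{n=1}^N c_n-\tfrac{1}{N+1}\sum_{n=1}^N n c_n$. The residual $\tfrac{1}{\pi(N+1)}\sum_{n=1}^N \Im[(\hat\mu(n)-1)(e(nb)-e(na))]$ tends to $0$ by bounded convergence, since for each $x\in\mathbb{T}$ the Ces\`aro average $\tfrac{1}{N+1}\sum_{n=1}^N e(n(t-x))$ is uniformly bounded by $1$ and converges pointwise to $\mathbf{1}_{\{x=t\}}$, so integration against $d\mu$ sends each expanded piece to $0$. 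Consequently the ordinary series converges to $\mu[a,b)+\tfrac12(\mu\{b\}-\mu\{a\})$.

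The same Weyl/bounded-convergence argument applied to $\tfrac{1}{n}\sum_{j=1}^n\hat\mu(j)e(jt)=\int \tfrac{1}{n}\sum_{j=1}^n e(j(t-x))\,d\mu(x)\to\mu\{t\}$ (for $t\in(0,1)$) yields
\[
\lim_{n\to\infty}\frac{1}{2n}\sum_{j=1}^n(\hat\mu(j)-1)[e(ja)-e(jb)]=\tfrac12\bigl(\mu\{a\}-\mu\{b\}\bigr),
\]
and adding this to the series value recovers $\mu[a,b)$ exactly. For the boundary cases $b=1$ and $a=0$, the point $0\equiv 1\in\mathbb{T}$ is a jump of $D_\mu$ from left limit $1$ to right limit $\mu\{0\}$, and $\tfrac{1}{n}\sum e(j\cdot 0)=1$ rather than $0$; these two deviations combine to produce precisely the extra $+1$ in the companion formula for $\mu[0,b)$. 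I expect the main obstacle to be the bookkeeping of this boundary behaviour together with the Ces\`aro-to-ordinary conversion via the residual error.
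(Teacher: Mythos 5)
Your proposal does not prove the statement at hand. The statement is Fej\'er's theorem itself: for a general $f\in L^1(\mathbb{T})$ possessing one-sided limits at $t$, the Ces\`aro means $\sigma_n(f,t)$ converge to $\frac{f(t+)+f(t-)}{2}$. What you have written is instead a derivation of the paper's Wiener-type theorem for arcs (Theorem~\ref{thm:arc}), and the very first substantive step of your argument is ``Fej\'er's theorem yields $\sigma_N(D_\mu,t)\to \mu[0,t)+\tfrac12\mu\{t\}$'' --- that is, you invoke as a black box exactly the result you were asked to prove. As a proof of the statement this is circular; moreover it only ever touches the special case $f=D_\mu$, the distribution function of a probability measure, whereas the statement concerns an arbitrary integrable $f$, for which no underlying measure, monotonicity, or boundedness is available.

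A proof of the actual statement must analyze the Fej\'er kernel, and the paper supplies the needed facts in Proposition~\ref{pcfejer}: $K_n\geq 0$, $\int_{\mathbb{T}}K_n(s)\,ds=1$, $K_n(s)=K_n(-s)$, and $\sup_{s\in[\delta,1-\delta]}K_n(s)\leq \frac{1}{(n+1)\sin^2\pi\delta}\to 0$. The standard argument uses the symmetry to fold the convolution onto $[0,\tfrac12]$ (where $\int_0^{1/2}K_n(s)\,ds=\tfrac12$), writing
\[
\sigma_n(f,t)-\frac{f(t+)+f(t-)}{2}=\int_0^{1/2}K_n(s)\bigl([f(t+s)-f(t+)]+[f(t-s)-f(t-)]\bigr)\,ds,
\]
and then splits the integral at a small $\delta$: on $[0,\delta]$ the bracket is uniformly small precisely because the one-sided limits exist, and the positivity and unit mass of $K_n$ bound that piece by $2\varepsilon$; on $[\delta,\tfrac12]$ the uniform decay of $K_n$ together with $f\in L^1$ forces the contribution to vanish as $n\to\infty$. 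None of this kernel analysis appears in your proposal. (Note the paper itself does not reprove Fej\'er's theorem; it cites Katznelson and Grafakos, whose proofs run along exactly the lines just sketched, and then \emph{applies} it to $D_\mu(x)-x$ much as you did --- but that application is the proof of Theorem~\ref{thm:arc}, not of Fej\'er's theorem.)
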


We also need a  generalized version of Wiener's theorem, which is a special case of~\cite[Thm. 4.1]{Huang2016}.

\begin{theorem}~[Generalized Wiener's theorem]\

Suppose $\mu$ is a finite Borel measure on $\mathbb{T}$. Then for every $z\in\mathbb{T}$, we have
$$\mu\{z\}=\lim_{n\to\infty}\frac{1}{|F_n|}\sum_{j\in F_n} \hat{\mu}(j)z^j.$$ Here $\{F_n\}_{n=1}^\infty$ is an arbitrarily chosen F{\o}lner sequence of $\mathbb{Z}$ and $|F_n|$ is the cardinality of $F_n$.
\end{theorem}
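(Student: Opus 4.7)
The plan is to convert the generalized Wiener identity into a statement about averaging trigonometric characters under $\mu$, then reduce it to pointwise convergence via dominated convergence. Writing $z=e(\alpha)$ for some $\alpha\in[0,1)$ and interchanging the finite sum with the integral that defines $\hat{\mu}(j)$, one gets
$$
\frac{1}{|F_n|}\sum_{j\in F_n}\hat{\mu}(j)z^j=\int_{\mathbb{T}} S_n(\alpha-x)\,d\mu(x),\qquad S_n(\theta):=\frac{1}{|F_n|}\sum_{j\in F_n}e(j\theta).
$$
The theorem then follows once we show $S_n(\theta)\to \mathbf{1}_{\{0\}}(\theta)$ pointwise in $\theta\in\mathbb{T}$, because the uniform bound $|S_n|\leq 1$ lets the dominated convergence theorem pass the limit inside the integral against the finite measure $\mu$, yielding $\mu\{\alpha\}=\mu\{z\}$.

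The crux is therefore the pointwise limit of $S_n(\theta)$ for $\theta\in\mathbb{T}\setminus\{0\}$ (the case $\theta=0$ being immediate since $S_n(0)=1$). Here I would exploit the F{\o}lner property by a translation/telescoping trick: multiplying $\sum_{j\in F_n}e(j\theta)$ by $e(\theta)$ re-indexes it to the sum over $F_n+1$, so
$$
(e(\theta)-1)\sum_{j\in F_n}e(j\theta)=\sum_{j\in (F_n+1)\setminus F_n}e(j\theta)-\sum_{j\in F_n\setminus (F_n+1)}e(j\theta),
$$
and the right-hand side has modulus at most $|F_n\triangle(F_n+1)|$. Since $\theta\neq 0$ in $\mathbb{T}$ forces $e(\theta)\neq 1$, one may divide to obtain
$$
|S_n(\theta)|\leq \frac{|F_n\triangle(F_n+1)|}{|F_n|\cdot |e(\theta)-1|}\longrightarrow 0
$$
as $n\to\infty$, directly from the defining property of a F{\o}lner sequence.

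Combining these two ingredients produces the desired identity. The only real obstacle is the pointwise estimate on $S_n(\theta)$: the identity must be shown for an \emph{arbitrary} F{\o}lner sequence in $\mathbb{Z}$, not just the symmetric intervals $[-N,N]$ handled by the classical Wiener theorem. However, the telescoping identity reduces this to a one-line invocation of the F{\o}lner condition $|F_n\triangle(F_n+1)|/|F_n|\to 0$, so no equidistribution or ergodic-theoretic machinery beyond the F{\o}lner definition itself is required. The rest is a routine interchange of sum and integral followed by dominated convergence against the bound $|S_n(\theta)|\leq 1$.
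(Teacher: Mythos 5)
Your proof is correct, but it is genuinely different from how the paper handles this statement: the paper does not prove it at all, instead citing it as a special case of~\cite[Thm. 4.1]{Huang2016} (see also~\cite{AnoussisBisbas2000}), where the result is obtained for general compact metrizable groups from a mean ergodic theorem for amenable discrete (quantum) groups. Your argument replaces that machinery with a self-contained computation specific to $\mathbb{T}$: the interchange of the finite sum with the integral reduces everything to the pointwise limit of the character averages $S_n(\theta)=\frac{1}{|F_n|}\sum_{j\in F_n}e(j\theta)$, the telescoping identity
$$(e(\theta)-1)\sum_{j\in F_n}e(j\theta)=\sum_{j\in (F_n+1)\setminus F_n}e(j\theta)-\sum_{j\in F_n\setminus (F_n+1)}e(j\theta)$$
bounds $|S_n(\theta)|$ by $\frac{|F_n\triangle(F_n+1)|}{|F_n|\,|e(\theta)-1|}$ for $e(\theta)\neq 1$, and the F{\o}lner condition with the single translate $g=1$ (which generates $\mathbb{Z}$, so nothing more is needed) kills this as $n\to\infty$; dominated convergence against the bound $|S_n|\leq 1$ and the finite measure $\mu$ finishes the proof. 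Each step is valid: the sum--integral interchange is over a finite sum, the symmetric-difference bound follows since the common part $F_n\cap(F_n+1)$ cancels, and the pointwise limit $S_n\to\mathbf{1}_{\{0\}}$ together with $\int \mathbf{1}_{\{\alpha\}}\,d\mu=\mu\{\alpha\}$ gives exactly the claimed identity. What your route buys is elementarity and transparency --- the F{\o}lner property enters exactly once, in the only place it can matter; what the paper's citation buys is generality, since the quoted theorem covers Fourier coefficients of measures on arbitrary compact metrizable groups, where characters must be replaced by irreducible representations and a genuine ergodic-theoretic argument is needed.
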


\section{A Wiener-type theorem for arcs}\

In this section, we prove the following.

\begin{theorem}~[a Wiener-type theorem for arcs]~\label{thm:arc}\

Suppose $\mu$ is a Borel probability measure on $\mathbb{T}$, then for all $0<a<b\leq 1$, we have

\begin{align}~\label{eq:wtarc}
\mu[a,b)&=\frac{1}{\pi}\sum_{n=1}^\infty \Im(\frac{\hat{\mu}(n)-1}{n}[e(nb)-e(na)]) \notag \\
&+\lim_{n\to \infty}\frac{1}{2n}\sum_{j=1}^n (\hat{\mu}(j)-1)[e(ja)-e(jb)].
\end{align}
Moreover for every  $0<b\leq 1$, we have
\begin{align}~\label{eq:wtarc2}
\mu[0,b)&=1+\frac{1}{\pi}\sum_{n=1}^\infty \Im(\frac{\hat{\mu}(n)-1}{n}[e(nb)-1])   \notag \\
&+\lim_{n\to \infty}\frac{1}{2n}\sum_{j=1}^n (\hat{\mu}(j)-1)[1-e(jb)].
\end{align}
The notation  $\Im(a)$ stands for the imaginary part of a complex number $a$.
\end{theorem}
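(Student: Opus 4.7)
The plan is to apply Fejér's theorem to the indicator $f=\chi_{[a,b)}$ integrated against $\mu$, and then to rearrange the Fourier side so that it matches the stated formula. First I compute $\hat{f}(0)=b-a$ and $\hat{f}(n)=(e(-na)-e(-nb))/(2\pi i n)$ for $n\neq 0$. The Fejér means $\sigma_N(f,t)$ are uniformly bounded by $\|f\|_\infty=1$ and, by Fejér's theorem, converge to $f(t)$ for every $t\notin\{a,b\}$ and to $\tfrac12$ at the two jump points. Dominated convergence therefore yields
\[
\lim_{N\to\infty}\int_0^1\sigma_N(f,t)\,d\mu(t)=\mu[a,b)-\tfrac12\mu\{a\}+\tfrac12\mu\{b\}.
\]

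On the Fourier side, since $\int e(jt)\,d\mu(t)=\hat{\mu}(-j)$ one has $\int\sigma_N(f,t)\,d\mu(t)=\sum_{|j|\le N}(1-|j|/(N+1))\hat{f}(j)\hat{\mu}(-j)$. Pairing $\pm j$ and using $\hat{\mu}(-j)=\overline{\hat{\mu}(j)}$ collapses this to the real form
\[
\int_0^1\sigma_N(f,t)\,d\mu(t)=(b-a)+\sum_{j=1}^N\Bigl(1-\frac{j}{N+1}\Bigr)\frac{\Im(w_j)}{\pi j},\qquad w_j:=\hat{\mu}(j)[e(jb)-e(ja)].
\]
Splitting $1-j/(N+1)$ decomposes the sum into $\sum_{j=1}^N\Im(w_j)/(\pi j)$ and the residue $\tfrac{1}{\pi(N+1)}\sum_{j=1}^N\Im(w_j)$. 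The generalized Wiener theorem applied to the Følner sequence $\{1,\dots,n\}$ gives $\tfrac1n\sum_{j=1}^n\hat{\mu}(j)e(jx)\to\mu\{x\}$, so the Cesàro limits of $w_j$ equal the real number $\mu\{b\}-\mu\{a\}$, those of $\Im(w_j)$ vanish, the residue tends to $0$, and simultaneously $\sum_{j\ge 1}\Im(w_j)/(\pi j)$ converges to $\mu[a,b)-(b-a)-\tfrac12\mu\{a\}+\tfrac12\mu\{b\}$.

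To recognise the right-hand side of \eqref{eq:wtarc} I expand
\[
\frac{1}{\pi}\Im\Bigl(\frac{\hat{\mu}(n)-1}{n}[e(nb)-e(na)]\Bigr)=\frac{\Im(w_n)}{\pi n}-\frac{\sin(2\pi nb)-\sin(2\pi na)}{\pi n}
\]
and evaluate the last sum via the classical identity $\sum_{n\ge 1}\sin(2\pi nx)/n=\pi(\tfrac12-x)$ on $(0,1)$ (value $0$ at $x\in\{0,1\}$), which for $0<a<b<1$ contributes the constant $(b-a)$. A second application of generalized Wiener identifies the Cesàro limit appearing in \eqref{eq:wtarc} as $\tfrac12(\mu\{a\}-\mu\{b\})$. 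Adding the three pieces on the right-hand side of \eqref{eq:wtarc} yields $\sum_{n\ge 1}\Im(w_n)/(\pi n)+(b-a)+\tfrac12(\mu\{a\}-\mu\{b\})$, which matches the expression for $\mu[a,b)$ obtained above. The boundary subcase $b=1$ of \eqref{eq:wtarc} and the case $a=0$ of \eqref{eq:wtarc2} follow the same scheme; the only modifications are that at an endpoint $x\in\{0,1\}$ the sawtooth series vanishes while $\tfrac1n\sum_{j=1}^n e(jx)$ equals $1$, and tracking this discrepancy is what produces the extra constant $1$ in \eqref{eq:wtarc2}.

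I expect the main obstacle to be this final bookkeeping step: the half-jump corrections produced by Fejér's theorem, the endpoint values of the sawtooth Fourier series, and the Cesàro limits produced by generalized Wiener all have to combine consistently across the four endpoint configurations $0<a<b<1$, $a=0$ with $0<b<1$, $0<a$ with $b=1$, and $(a,b)=(0,1)$. The preceding analytic steps --- termwise Parseval, splitting the Fejér weights, and dominated convergence --- are essentially routine once the indicator $f=\chi_{[a,b)}$ has been chosen as the test function.
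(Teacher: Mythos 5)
Your proposal is correct, and it reaches the theorem by a genuinely different route from the paper's, even though both rest on the same two pillars (Fej\'er's theorem and the generalized Wiener theorem along the F{\o}lner sequence $\{1,\dots,n\}$). The paper never smooths the indicator $\chi_{[a,b)}$: it applies Fej\'er's theorem to the bounded-variation function $f(x)=D_\mu(x)-x$, where $D_\mu(x)=\mu[0,x)$. Integration by parts gives $\hat f(n)=\hat\mu(n)/(2\pi i n)$ for $n\neq 0$; Wiener's theorem shows $\sigma_n(f,t)$ and $S_n(f)(t)$ have the same limit; Fej\'er's theorem then yields a pointwise identity for $D_\mu(x)-x$ at every $x\in[0,1]$ (Equation~\ref{eq: comparison}), after Wiener is used again to absorb the half-atom $\tfrac12\mu\{x\}$ and to rewrite $\hat f(0)$ as a Ces\`aro limit. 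The ``$\hat\mu(n)-1$'' form then drops out purely algebraically by subtracting the same identity written for $\nu=\delta_0$ (all Fourier coefficients equal to $1$, distribution function identically $1$ on $(0,1]$), and \eqref{eq:wtarc} follows from $\mu[a,b)=D_\mu(b)-D_\mu(a)$. You instead pair the Fej\'er means of the indicator against $\mu$ (dominated convergence on the measure side, termwise pairing on the Fourier side) and import the classical sawtooth series $\sum_{n\geq 1}\sin(2\pi nx)/n=\pi(\tfrac12-x)$ on $(0,1)$ to account for the ``$-1$'' part of $\hat\mu(n)-1$. I verified your key identities --- the Fej\'er limit $\mu[a,b)-\tfrac12\mu\{a\}+\tfrac12\mu\{b\}$, the pairing giving $\Im(w_j)/(\pi j)$, and the cancellation of the half-jumps against the Ces\`aro term --- and they are all correct, including the endpoint mechanism you sketch: the two $\tfrac12$-discrepancies at $x\in\{0,1\}$ (sawtooth vanishing there, Ces\`aro mean of $e(jx)$ equal to $1$ there) do combine into the constant $1$ of \eqref{eq:wtarc2}. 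What your route buys: no Stieltjes integration by parts or BV machinery, and the roles of the atoms and the sawtooth are completely explicit. What the paper's route buys: the $\delta_0$-subtraction handles all endpoint configurations uniformly, so there is no four-case bookkeeping (in your write-up the cases $a=0$ and $b=1$ are only sketched, though correctly), and it produces the intermediate formula for $D_\mu(x)-x$ that the paper reuses later (e.g.\ in Theorem~\ref{th: locDim}). One caution for a final write-up: quote the sawtooth identity as a classical fact (Dirichlet test plus Abel summation, or Fej\'er's theorem applied to the sawtooth function itself), not as a consequence of Corollary~\ref{conv} or the Remark following it, since those are downstream of the very theorem you are proving.
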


\begin{proof}

Let $D_\mu(x)$ be $\mu[0,x)$ for every $x\in[0,1]$. Denote $D_\mu(x)-x$ by $f$. Note that both $D_\mu(x)$ and $x$ are monotone functions, so $f$ is of bounded variation. Hence integration by parts works for $f$.

Then for every nonzero $n$,  we have
\begin{align*}
&\hat{f}(n)=\int_0^1 f(x)e(-nx)\,dx=\int_0^1 f(x)\,d[\frac{1}{-2\pi in}e(-nx)] \\
&=\frac{1}{-2\pi in}[D_\mu(x)-x]e(-nx)|_0^1+\frac{1}{2\pi in} \int_0^1 e(-nx)\,d [D_\mu(x)-x]   \\
&=\frac{1}{2\pi in} \int_0^1 e(-nx)\,d\mu(x)=\frac{\hat{\mu}(n)}{2\pi in},
\end{align*}
and $\hat{f}(0)=\int_0^1 f(x)\,dx.$

Hence
\begin{align*}
&\sigma_n(f,t)=\sum_{j=-n}^n \hat{f}(j)e(jt)-\sum_{j=-n}^n \frac{|j|}{n+1}\hat{f}(j)e(jt)\\
&=S_n(f)(t)-\sum_{j=1}^n \frac{j}{n+1}\frac{\hat{\mu}(j)}{2\pi ij}e(jt)-\sum_{j=-n}^{-1} \frac{-j}{n+1}\frac{\hat{\mu}(j)}{2\pi ij}e(jt)\\
&=S_n(f)(t)-\sum_{j=1}^n \frac{j}{n+1}[\frac{\hat{\mu}(j)}{2\pi ij}e(jt)+\frac{\hat{\mu}(-j)}{-2\pi ij}e(-jt)]    \\
&=S_n(f)(t)-\frac{1}{n+1}\sum_{j=1}^n\frac{1}{2\pi i} [\hat{\mu}(j)e(jt)-\overline{\hat{\mu}(j)e(jt)}] \\
&=S_n(f)(t)-\frac{1}{\pi}\frac{1}{n+1}\sum_{j=1}^n \Im(\hat{\mu}(j)e(jt)).
\end{align*}
Since $\mu$ is a Borel probability measure, it follows from Wiener's theorem~\footnote{We use a version of Wiener's theorem different from~\cite[Chap. I, 7.13]{Katznelson2004}, and the difference lies in choices of F{\o}lner sequences of $\mathbb{Z}$. But Wiener's theorem holds for every F{\o}lner sequence. See~\cite{AnoussisBisbas2000} and~\cite[Thm. 4.1]{Huang2016}.} that
$$\lim_{n\to\infty}\frac{1}{n+1}\sum_{j=1}^n \Im(\hat{\mu}(j)e(jt))=\Im(\lim_{n\to\infty}\frac{1}{n+1}\sum_{j=1}^n \hat{\mu}(j)e(jt))=\Im(\mu\{t\})=0.$$  Hence by Fej\'{e}r's theorem, for every $0<t<1$ we have that
$$\frac{f(t+)+f(t-)}{2}=\lim_{n\to\infty}\sigma_n(f,t)=\lim_{n\to\infty} S_n(f)(t).$$

Since $f$ is  real-valued, it is easy to see that $\overline{\hat{f}(n)}=\hat{f}(-n)$.

We have
$$S_n(f)(t)=\frac{1}{\pi}\sum_{j=1}^n \frac{1}{j}\Im(\hat{\mu}(j)e(jt))+\hat{f}(0).$$

Since $f(0+)=\mu\{0\}$ and $f(0-)=f(1-)=\displaystyle\lim_{x\to 1-}\mu[0,x)-x=1-1=0$, we get
\begin{align*}
&\hat{f}(0)=\frac{\mu\{0\}}{2}-\frac{1}{\pi}\sum_{j=1}^\infty \Im(\frac{\hat{\mu}(j)}{j})\\
&=\lim_{n\to \infty}\frac{1}{2n}\sum_{j=1}^n \hat{\mu}(j)-\frac{1}{\pi}\sum_{j=1}^n \Im(\frac{\hat{\mu}(j)}{j}).
\end{align*}
For every $x\in [0,1]$, the limits $f(x+)$ and $f(x-)$ always exist and equal $\mu[0,x)+\mu[0,x]-2x$.

Hence
\begin{align*}
&\frac{\mu[0,x)+\mu[0,x]}{2}-x=\frac{f(x+)+f(x-)}{2}  \\
&=\frac{1}{\pi}\sum_{n=1}^\infty\Im(\frac{\hat{\mu}(n)}{n}e(nx))+\hat{f}(0)  \\
&=\frac{1}{\pi}\sum_{n=1}^\infty\Im(\frac{\hat{\mu}(n)}{n}[e(nx)-1])+\lim_{n\to \infty}\frac{1}{2n}\sum_{j=1}^n \hat{\mu}(j)
\end{align*}
for every $x$ in $[0,1]$.

Note that $\frac{\mu[0,x)+\mu[0,x]}{2}=D_\mu(x)+\frac{\mu\{x\}}{2}$ and it follows from Wiener's theorem again that $\mu\{x\}\displaystyle=\lim_{n\to\infty}\frac{1}{n}\sum_{j=1}^n\hat{\mu}(j)e(jx)$.

Hence
\begin{equation}~\label{eq: comparison}
D_\mu(x)-x=\frac{1}{\pi}\sum_{n=1}^\infty\Im(\frac{\hat{\mu}(n)}{n}[e(nx)-1])+\lim_{n\to \infty}\frac{1}{2n}\sum_{j=1}^n \hat{\mu}(j)[1-e(jx)].
\end{equation}

Let $\nu$ be another Borel probability measure on $\mathbb{T}$.

Then we have
\begin{align*}
D_\mu(x)-D_\nu(x)&=\frac{1}{\pi}\sum_{n=1}^\infty\Im(\frac{\hat{\mu}(n)-\hat{\nu}(n)}{n}[e(nx)-1])   \\
&+\lim_{n\to \infty}\frac{1}{2n}\sum_{j=1}^n (\hat{\mu}(j)-\hat{\nu}(j))[1-e(jx)].
\end{align*}

We choose $\nu=\delta_0$~(the Dirac measure concentrating at the point 0). Note that $D_{\delta_0}(x)=1$ for $0<x\leq 1$ and $\hat{\delta_0}(n)=1$ for every  integer $n$.
Hence for every  $0<b\leq 1$, one has
\begin{align*}~\label{eq:wtarc2}
\mu[0,b)&=1+\frac{1}{\pi}\sum_{n=1}^\infty \Im(\frac{\hat{\mu}(n)-1}{n}[e(nb)-1])   \notag \\
&+\lim_{n\to \infty}\frac{1}{2n}\sum_{j=1}^n (\hat{\mu}(j)-1)[1-e(jb)].
\end{align*}

For all $0<a<b\leq 1$, since $\mu[a,b)=D_\mu(b)-D_\mu(a)$,  we get

\begin{align*}
\mu[a,b)&=\frac{1}{\pi}\sum_{n=1}^\infty \Im(\frac{\hat{\mu}(n)-1}{n}[e(nb)-e(na)]) \notag \\
&+\lim_{n\to \infty}\frac{1}{2n}\sum_{j=1}^n (\hat{\mu}(j)-1)[e(ja)-e(jb)].
\end{align*}

\end{proof}

\begin{corollary}~\label{cor:d}
If $\mu$ is a  Borel probability measure on $\mathbb{T}$, then
\begin{equation}~\label{eq:d}
\mu[a,b)-(b-a)=\frac{1}{\pi}\sum_{n=1}^\infty  \Im(\frac{\hat{\mu}(n)}{n}[e(nb)-e(na)])+\lim_{n\to \infty}\frac{1}{2n}\sum_{j=1}^n \hat{\mu}(j)[e(ja)-e(jb)].
\end{equation}
for all $0<a<b\leq 1$.
\end{corollary}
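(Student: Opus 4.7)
The plan is to derive the corollary directly from equation~\eqref{eq: comparison}, which was established during the proof of Theorem~\ref{thm:arc}. Recall that this identity reads
$$D_\mu(x)-x=\frac{1}{\pi}\sum_{n=1}^\infty\Im\bigl(\tfrac{\hat{\mu}(n)}{n}[e(nx)-1]\bigr)+\lim_{n\to \infty}\frac{1}{2n}\sum_{j=1}^n \hat{\mu}(j)[1-e(jx)]$$
for every $x\in[0,1]$.

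First I would apply this formula at the endpoint $x=b$ and then at $x=a$. Since $\mu[a,b)=D_\mu(b)-D_\mu(a)$, subtracting the two instances yields
$$\mu[a,b)-(b-a)=[D_\mu(b)-b]-[D_\mu(a)-a].$$
The key observation is that the constants $-1$ inside $[e(nx)-1]$ and $+1$ inside $[1-e(jx)]$ cancel when one subtracts the $x=a$ identity from the $x=b$ one, leaving only the differences $e(nb)-e(na)$ and $e(ja)-e(jb)$ respectively. This produces exactly \eqref{eq:d}.

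The only mild point to check is that both the convergent series and the Ces\`aro-type limit on the right of \eqref{eq: comparison} exist at $x=a$ and at $x=b$ separately, so that we are allowed to split the difference into two pieces and combine them termwise; but this is already guaranteed by the proof of Theorem~\ref{thm:arc}, so there is no genuine obstacle. The argument is thus a one-line subtraction from \eqref{eq: comparison}.
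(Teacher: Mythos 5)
Your proposal is correct and matches the paper's intent: the paper states Corollary~\ref{cor:d} without a separate proof precisely because it follows from Equation~\ref{eq: comparison} by evaluating at $x=b$ and $x=a$ and subtracting, which is exactly your argument (and is the same subtraction device the paper itself uses, with $\nu=\delta_0$, to finish the proof of Theorem~\ref{thm:arc}). Your remark that both the series and the Ces\`aro limit exist separately at each point (by Fej\'er's and Wiener's theorems, as established in the proof of Theorem~\ref{thm:arc}) is the right justification for combining the two instances termwise.
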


\begin{corollary}~\label{cor:wtc}
If $\mu$ is a continuous measure on $\mathbb{T}$, then
\begin{equation}~\label{eq:wtc}
\mu[a,b)=\frac{1}{\pi}\sum_{n=1}^\infty \Im(\frac{\hat{\mu}(n)-1}{n}[e(nb)-e(na)])
\end{equation}
for all $0<a<b<1$, and
\begin{equation}~\label{eq:wtc2}
\mu[0,b)=\frac{1}{2}+\frac{1}{\pi}\sum_{n=1}^\infty \Im(\frac{\hat{\mu}(n)-1}{n}[e(nb)-1])
\end{equation}
for every $0<b<1$.
\end{corollary}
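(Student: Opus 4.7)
The plan is to deduce Corollary \ref{cor:wtc} directly from Theorem \ref{thm:arc} by showing that, when $\mu$ is continuous, the Cesàro-type limits appearing in \eqref{eq:wtarc} and \eqref{eq:wtarc2} evaluate explicitly to $0$ and $-\tfrac{1}{2}$ respectively.

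The single key input is the generalized Wiener's theorem applied with the Følner sequence $F_n=\{1,2,\dots,n\}$. For every $t\in\mathbb{R}$ it gives
$$\lim_{n\to\infty}\frac{1}{n}\sum_{j=1}^n\hat{\mu}(j)e(jt)=\mu\{e(t)\}=0,$$
the last equality because $\mu$ is continuous. In parallel, for every $0<t<1$ the geometric sum $\sum_{j=1}^n e(jt)$ is bounded by $2/|e(t)-1|$, so $\frac{1}{n}\sum_{j=1}^n e(jt)\to 0$ as well.

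For \eqref{eq:wtc}, I would expand
$$\frac{1}{2n}\sum_{j=1}^n(\hat{\mu}(j)-1)\bigl[e(ja)-e(jb)\bigr]$$
into four Cesàro means. Under the hypothesis $0<a<b<1$ each of the four tends to $0$ by the two facts above, so the limit term in \eqref{eq:wtarc} vanishes and \eqref{eq:wtc} follows.

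For \eqref{eq:wtc2}, the same expansion applied to
$$\frac{1}{2n}\sum_{j=1}^n(\hat{\mu}(j)-1)\bigl[1-e(jb)\bigr]$$
produces three Cesàro means that go to $0$ (using $0<b<1$ and the fact that $\mu\{0\}=0$ by continuity of $\mu$) together with the deterministic contribution $-\tfrac{1}{2n}\sum_{j=1}^n 1=-\tfrac{1}{2}$. Replacing the limit term in \eqref{eq:wtarc2} by $-\tfrac{1}{2}$ turns the additive constant $1$ into $\tfrac{1}{2}$, which is exactly \eqref{eq:wtc2}. There is no real obstacle beyond tracking these four terms; the content of the corollary is precisely the observation that the two atomic-mass corrections present in Theorem \ref{thm:arc} disappear when $\mu$ charges no point.
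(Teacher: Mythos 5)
Your proposal is correct and is essentially the paper's own argument: the paper likewise deduces the corollary by combining Theorem \ref{thm:arc} with Wiener's theorem (in its F{\o}lner-sequence form for $F_n=\{1,\dots,n\}$), which is exactly your observation that the Ces\`aro limit terms equal $0$ and $-\tfrac{1}{2}$ for a continuous measure. You have merely written out the four-term expansion that the paper leaves implicit, including the correct accounting for why $b<1$ (rather than $b\leq 1$) is needed.
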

\begin{proof}
Equations~\ref{eq:wtc} and~\ref{eq:wtc2} are special cases of  Equations~\ref{eq:wtarc} and~\ref{eq:wtarc2}. Combining Wiener's theorem and Theorem~\ref{thm:arc} gives the proof.
\end{proof}

Denote the set of complex-valued continuous functions on $\mathbb{T}$ by $C(\mathbb{T})$.

The {\bf convolution} $\mu*\nu$ of two finite Borel measures $\mu$ and $\nu$ is a finite Borel measure given by
$$\mu*\nu(f)=\int_0^1\int_0^1f(x+y)\,d\mu(x)d\nu(y)$$ for all $f\in C(\mathbb{T})$. From definition one has
$\widehat{\mu*\nu}(n)=\hat{\mu}(n)\hat{\nu}(n)$ for all $n\in\mathbb{Z}$.

The {\bf conjugate} $\bar{\mu}$ of a finite Borel measure $\mu$ is a finite Borel measure given by
$$\bar{\mu}(f)=\int_0^1 f(1-x)\,d\mu(x)$$ for all $f\in C(\mathbb{T})$. Note that $\hat{\bar{\mu}}(n)=\overline{\hat{\mu}(n)}$ for all $n\in\mathbb{Z}$.

As a consequence of Theorem~\ref{thm:arc}, we have the following.

\begin{corollary}~\label{conv}
Suppose $\mu$ is a continuous Borel probability measure on $\mathbb{T}$, then
\begin{equation}~\label{eq:wt-conv}
\mu*\bar{\mu}[a,b)=\frac{1}{\pi}\sum_{n=1}^\infty \frac{|\hat{\mu}(n)|^2-1}{n}[\sin(2\pi nb)-\sin(2\pi na)]
\end{equation}
for all $0<a<b\leq 1$, and
\begin{equation}~\label{eq:wt-conv2}
\mu*\bar{\mu}[0,b)=\frac{1}{2}+\frac{1}{\pi}\sum_{n=1}^\infty  \frac{|\hat{\mu}(n)|^2-1}{n}\sin(2\pi nb)+\lim_{n\to \infty}\frac{1}{2n}\sum_{j=1}^n e(jb)
\end{equation}
for every $0<b\leq 1$.
In particular, for all $0<b<1$, it holds that
\begin{equation}~\label{eq:wt-conv2}
\mu*\bar{\mu}[0,b)=\frac{1}{2}+\frac{1}{\pi}\sum_{n=1}^\infty  \frac{|\hat{\mu}(n)|^2-1}{n}\sin(2\pi nb).
\end{equation}
\end{corollary}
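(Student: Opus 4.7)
The strategy is to apply Theorem~\ref{thm:arc} (or its continuous-measure specialization, Corollary~\ref{cor:wtc}) to the auxiliary measure $\mu*\bar\mu$, exploiting the fact that $\widehat{\mu*\bar\mu}(n)=|\hat\mu(n)|^2$ is real-valued. As a preliminary step, I would record that $\mu*\bar\mu$ is a Borel probability measure on $\mathbb{T}$ with Fourier coefficients $\widehat{\mu*\bar\mu}(n)=\hat\mu(n)\,\hat{\bar\mu}(n)=|\hat\mu(n)|^2$, using the identities $\widehat{\mu*\nu}(n)=\hat\mu(n)\hat\nu(n)$ and $\hat{\bar\mu}(n)=\overline{\hat\mu(n)}$ stated just before the corollary. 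It is also continuous whenever $\mu$ is: the standard identity $\mu*\bar\mu(\{z\})=\int_\mathbb{T}\mu(\{z-y\})\,d\bar\mu(y)$ together with $\mu(\{\cdot\})\equiv 0$ forces the integrand to vanish identically.

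For~(\ref{eq:wt-conv}), I would plug $\mu*\bar\mu$ into Equation~(\ref{eq:wtc}). Since $|\hat\mu(n)|^2-1$ is real, the imaginary part factors out to produce $\frac{|\hat\mu(n)|^2-1}{n}\bigl[\sin(2\pi nb)-\sin(2\pi na)\bigr]$, which is exactly~(\ref{eq:wt-conv}) on $[a,b)$ with $0<a<b<1$.

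For~(\ref{eq:wt-conv2}) with $0<b\le 1$ I would invoke the ``moreover'' clause of Theorem~\ref{thm:arc} applied to $\mu*\bar\mu$, which gives
\begin{align*}
\mu*\bar\mu[0,b)=1&+\frac{1}{\pi}\sum_{n=1}^\infty\frac{|\hat\mu(n)|^2-1}{n}\sin(2\pi nb)\\
&+\lim_{n\to\infty}\frac{1}{2n}\sum_{j=1}^n(|\hat\mu(j)|^2-1)\bigl[1-e(jb)\bigr].
\end{align*}
The crux is to simplify the last limit. Expanding $(|\hat\mu(j)|^2-1)[1-e(jb)]$ produces four Cesàro averages. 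By the generalized Wiener theorem applied to the continuous measure $\mu*\bar\mu$ along the F{\o}lner sequence $F_n=\{1,\dots,n\}$, the averages $\frac{1}{n}\sum_{j=1}^n|\hat\mu(j)|^2 z^j$ converge to $(\mu*\bar\mu)\{z\}=0$ for every $z\in\mathbb{T}$, killing both the $\sum|\hat\mu(j)|^2$ piece and the $\sum|\hat\mu(j)|^2 e(jb)$ piece. The remaining two pieces consolidate into $-\tfrac12+\lim_n\tfrac{1}{2n}\sum_{j=1}^n e(jb)$, and combining with the leading $1$ delivers~(\ref{eq:wt-conv2}) as stated. For the ``in particular'' case $0<b<1$, the geometric sum $\sum_{j=1}^n e(jb)=\frac{e(b)[1-e(nb)]}{1-e(b)}$ is bounded, so the final Cesàro term vanishes and the extra summand disappears.

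The main technical obstacle is the bookkeeping in the Cesàro simplification: one has to split the limit into four averages and recognize that the generalized Wiener theorem for the continuous measure $\mu*\bar\mu$ annihilates the two involving $|\hat\mu(j)|^2$. Beyond this, everything reduces to the reality of $|\hat\mu(n)|^2-1$ and algebraic rearrangement from Theorem~\ref{thm:arc}.
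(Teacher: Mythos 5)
Your proof is correct and follows the paper's approach exactly: the paper's entire proof is the one-line remark that $\widehat{\mu*\bar{\mu}}(n)=|\hat{\mu}(n)|^2$, and the details it leaves implicit are precisely those you supply (continuity of $\mu*\bar{\mu}$, reality of its Fourier coefficients, and the Ces\`aro bookkeeping via Wiener's theorem). One remark: you establish Equation~(\ref{eq:wt-conv}) only for $0<a<b<1$, whereas the corollary asserts it for $b\leq 1$; this is a defect of the statement rather than of your argument, since at $b=1$ the Ces\`aro term in Theorem~\ref{thm:arc} contributes an extra $\frac{1}{2}$ exactly as in your computation for~(\ref{eq:wt-conv2}) (test with $\mu$ equal to Lebesgue measure, where $\mu*\bar{\mu}[a,1)=1-a$ but the right-hand side equals $\frac{1}{2}-a$), so your restriction to $b<1$ is in fact the correct range of validity.
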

\begin{proof}
It follows from that $\widehat{\mu*\bar{\mu}}(n)=|\hat{\mu}(n)|^2$ for all $n\in\mathbb{Z}$.
\end{proof}

\begin{remark}
  Let $\mu$ be the Lebesgue measure on $\mathbb{T}$. Then $\mu$ is a continuous measure with $\hat{\mu}(n)=0$ for $n\geq 1$. Applying Equation~\ref{eq:wt-conv2}, we get
  $$\sum_{n=1}^{\infty}\frac{\sin nx}{n}=\frac{\pi-x}{2}$$ for all $x\in (0,2\pi)$.
\end{remark}

\section{Applications}
\subsection{Fourier series of the Cantor function}\

Let $C(x)$ be the Cantor function, i.e., the distribution function of the Cantor measure $c$. It's known that~\cite{BlumEpstein1974}
$$\hat{c}(n)=(-1)^n\prod_{k=1}^\infty \cos(\frac{2n\pi}{3^k}).$$

Then we get the Fourier series of $C(x)$:
$$C(x)=\frac{1}{2}+\frac{1}{\pi}\sum_{n=1}^\infty \frac{(-1)^n\prod_{k=1}^\infty \cos(\frac{2n\pi}{3^k})-1}{n}\sin(2\pi nx)$$
for every $0<x<1$.
\subsection{Computations of local dimensions}\

In this section, we only talk about measures on the unit circle.


\begin{definition}~\label{def: locDim}~\cite[10.1]{Falconer1997}\

The  lower and upper {\bf local dimension} of a measure $\mu$ on $\mathbb{T}$ at a point $x$  are given by

$$\underline{\dim}(\mu,x)=\underset{r\to 0+}{\underline{\lim}}\frac{\log \mu(B_r(x))}{\log r},$$ and

$$\overline{\dim}(\mu,x)=\underset{r\to 0+}{\overline{\lim}}\frac{\log \mu(B_r(x))}{\log r}.$$ We say that the {\bf local dimension} of $\mu$, denoted by $\dim_{\rm loc}(\mu,x)$, exists at $x$ if these two coincide. Here $B_r(x)$ is the open ball centered at $x$ with radius $r$.
\end{definition}
If $x$ is not in the support of $\mu$, then $\dim_{\rm loc}(\mu,x)=\infty$. If $x$ is an atom for $\mu$, then $\dim_{\rm loc}(\mu,x)=0$.

When a measure satisfies certain conditions,  Equation~\ref{eq: comparison} helps in computations of  $\dim_{\rm loc}(\mu,x)$.
\begin{theorem}~\label{th: locDim}
If $\mu$ is a Borel probability measure such that $\sum_{n=1}^\infty |\hat{\mu}(n)|<\frac{1}{2}$, then $\dim_{\rm loc}(\mu,x)=1$ for every $x\in (0,1)$.
\end{theorem}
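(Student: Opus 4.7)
The plan is to apply Corollary~\ref{cor:d} to the arc $[a,b)=[x-r,x+r)$, which (once we verify $\mu$ has no atoms) equals the open ball $B_r(x)$ for sufficiently small $r$. Under the hypothesis $S:=\sum_{n=1}^\infty |\hat{\mu}(n)|<\frac{1}{2}$, I expect to show that $\mu(B_r(x))$ is comparable to $2r$; explicitly, $(2-4S)r\le \mu(B_r(x))\le (2+4S)r$, and taking logarithms and dividing by $\log r$ will force both the upper and lower local dimensions to equal $1$.

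First I would fix $x\in(0,1)$ and restrict to $0<r<\min(x,1-x)$ so that $0<x-r<x+r\leq 1$. Using $e(n(x+r))-e(n(x-r))=2i\,e(nx)\sin(2\pi nr)$, the main series in Equation~\ref{eq:d} becomes, up to the factor $1/\pi$, a sum of terms $\frac{2\sin(2\pi nr)}{n}\Re(\hat{\mu}(n)e(nx))$. Applying $|\sin(2\pi nr)|\le 2\pi nr$ cancels the $1/n$ and yields the clean estimate
\begin{equation*}
\left|\frac{1}{\pi}\sum_{n=1}^\infty \Im\!\left(\frac{\hat{\mu}(n)}{n}[e(n(x+r))-e(n(x-r))]\right)\right|\le 4r\sum_{n=1}^\infty |\hat{\mu}(n)| = 4Sr.
\end{equation*}
For the Ces\`aro correction, the $j$-th summand is bounded by $2|\hat{\mu}(j)|$, so the partial sum is at most $2\sum_{j=1}^n|\hat{\mu}(j)|\le 2S$; dividing by $2n$ shows this Ces\`aro limit equals $0$ for every $r$.

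Before concluding, I would briefly argue that $B_r(x)=[x-r,x+r)$ $\mu$-almost exactly. Since $\sum|\hat{\mu}(n)|<\infty$ forces $\hat{\mu}(n)\to 0$, Wiener's theorem gives $\mu\{z\}=0$ for every $z\in\mathbb{T}$, so $\mu$ is continuous and the two sets have the same measure. Combining the two estimates with Corollary~\ref{cor:d} then yields
\begin{equation*}
(2-4S)\,r \;\le\; \mu(B_r(x)) \;\le\; (2+4S)\,r
\end{equation*}
for all sufficiently small $r>0$, with $2-4S>0$ by hypothesis. Taking logarithms and dividing by $\log r$ (which is negative), both bounds tend to $1$ as $r\to 0^+$, so $\underline{\dim}(\mu,x)=\overline{\dim}(\mu,x)=1$.

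I do not anticipate a serious obstacle; the argument is essentially a direct substitution into Equation~\ref{eq:d}. The only point requiring mild care is the uniform-in-$r$ control of the Ces\`aro term and the open/closed distinction for balls, both handled by the observation that $\sum|\hat{\mu}(n)|<\infty$ makes $\mu$ atomless. The constant $\tfrac12$ is what makes $2-4S$ strictly positive, and this positivity (rather than the specific value $1$) is what drives the conclusion.
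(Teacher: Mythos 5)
Your proof is correct and follows essentially the same route as the paper's: both establish atomlessness via Wiener's theorem, apply the Wiener-type arc formula to $[x-r,x+r)$, use $|\sin t|\le|t|$ to bound the correction series by a constant multiple of $r$ (with the hypothesis $S<\tfrac12$ making the lower constant positive), and conclude by taking logarithms. The only cosmetic difference is that you start from Corollary~\ref{cor:d} and kill the Ces\`aro term by absolute summability, whereas the paper starts from Equation~\ref{eq:wtc}, where that term has already been removed using continuity of $\mu$.
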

\begin{proof}
By Wiener's theorem $\mu$ is a continuous measure. By Equation~\ref{eq:wtc}, for every $x\in(0,1)$, we have
$$\mu(B_r(x))=2r+\frac{1}{\pi}\sum_{n=1}^\infty \Im[\frac{\hat{\mu}(n)}{n}(e(n(x+r))-e(n(x-r)))].$$
Moreover
\begin{align*}
&|\frac{1}{2\pi r}\sum_{n=1}^\infty \Im[\frac{\hat{\mu}(n)}{n}(e(n(x+r))-e(n(x-r)))]| \\
&\leq \frac{1}{2\pi r}\sum_{n=1}^\infty |\frac{\hat{\mu}(n)}{n}(e(n(x+r))-e(n(x-r)))| \\
&=\frac{1}{2\pi r}\sum_{n=1}^\infty |\frac{\hat{\mu}(n)}{n}(e(nr)-e(-nr))| \\
&\leq \sum_{n=1}^\infty |\hat{\mu}(n)|\frac{2|\sin (2n\pi r)|}{2n\pi r}
\tag{$\frac{|\sin x|}{|x|}\leq 1$ for all nonzero $x$} \\
&\leq 2\sum_{n=1}^\infty |\hat{\mu}(n)|<1.
\end{align*}
for all $r>0$.

It follows that
\begin{align*}
&\log(1-2\sum_{n=1}^\infty |\hat{\mu}(n)|)\leq \log(1+\frac{1}{2\pi r}\sum_{n=1}^\infty \Im[\frac{\hat{\mu}(n)}{n}(e(n(x+r))-e(n(x-r)))])  \\
&\leq \log(1+2\sum_{n=1}^\infty |\hat{\mu}(n)|)
\end{align*}
for all $r>0$.

Hence
\begin{align*}
&\frac{\log \mu(B_r(x))}{\log r}=\frac{1}{\log r}[\log 2r+\log(1+\frac{1}{2\pi r}\sum_{n=1}^\infty \Im[\frac{\hat{\mu}(n)}{n}(e(n(x+r))-e(n(x-r)))])]   \\
&\to 1
\end{align*}
as $r\to 0$.
\end{proof}

We construct examples of measures satisfying the assumption of Theorem~\ref{th: locDim}.

\begin{example}
Take a sequence of real numbers $\{a_n\}_{n=1}^\infty$ such that $\sum_{n=1}^\infty |a_n|<1$. Define $\mu$ by $\mu(f)=\int_0^1 f(t)[1+\sum_{n=1}^\infty a_n\cos(2\pi nt)]\,dt$ for all $f\in C(\mathbb{T})$. Then $\mu$ is a Borel probability measure on $\mathbb{T}$ such that $\hat{\mu}(n)=\frac{a_n}{2}$ for all $n\geq 1$.
\end{example}

\end{document}